\documentclass{amsart}
\usepackage{fullpage}
\usepackage{graphicx}
\usepackage{amsmath,amsfonts,amssymb,latexsym,epic,eepic}
\usepackage{dsfont}
\usepackage[mathcal]{euscript}
\usepackage{multicol}
\usepackage{tikz}
\usetikzlibrary{patterns}

\newtheorem{theorem}{Theorem}[section]
\newtheorem{lemma}[theorem]{Lemma}

\newtheorem{remark}{Remark}[section]

\newcommand{\xN}{\mathbb{N}}
\newcommand{\xR}{\mathbb{R}}
\newcommand{\xH}{{\rm H}}


\newcommand{\bfC}{{\boldsymbol C}}
\newcommand{\bfE}{{\boldsymbol E}}
\newcommand{\bfH}{{\boldsymbol H}}
\newcommand{\bff}{{\boldsymbol f}}
\newcommand{\bfu}{{\boldsymbol u}}
\newcommand{\bfuini}{{\bfu_0}}
\newcommand{\bfv}{{\boldsymbol v}}
\newcommand{\bfw}{{\boldsymbol w}}
\newcommand{\bfe}{{\boldsymbol e}}
\newcommand{\bfn}{\boldsymbol n}
\newcommand{\bfx}{\boldsymbol x}
\newcommand{\bfX}{\boldsymbol X}

\newcommand{\dgammax}{\ \mathrm{d}\gamma(\bfx)}
\newcommand{\dx}{\ \mathrm{d}\bfx}
\newcommand{\dt}{\ \mathrm{d} t}

 
\newcommand{\vi}{v_i}

\newcommand{\mesh}{{\mathcal M}}
\newcommand{\edge}{{\sigma}}
\newcommand{\edgeperp}{{\tau}}
\newcommand{\edges}{{\mathcal E}}
\newcommand{\edgesK}{\edges(K)}

\newcommand{\edgesint}{{\mathcal E}_{\mathrm{int}}}
\newcommand{\edgesext}{{\mathcal E}_{\mathrm{ext}}}
\newcommand{\edgesinti}{{\mathcal E}_{\mathrm{int}}^{(i)}}

\newcommand{\edgesexti}{{\mathcal E}_{\mathrm{ext}}^{(i)}}
\newcommand{\edgesi}{{\edges\ei}}
\newcommand{\edgesj}{{\edges\ej}}
\newcommand{\edged}{\epsilon}
\newcommand{\edgesd}{{\widetilde {\edges}}}
\newcommand{\edgesdi}{{\edgesd^{(i)}}}
\newcommand{\edgesdij}{{\edgesd^{(i,j)}}} 

\newcommand{\edgesdinti}{{\edgesd^{(i)}_{{\rm int}}}}

\newcommand{\ei}{^{(i)}}
\newcommand{\ej}{^{(j)}}

\newcommand{\Fcvedge}{F_{K,\sigma}}
\newcommand{\ucvedge}{u_{K,\sigma}}


\newcommand{\Hmeshzero}{\bfH_{\!\edges,0}}
\newcommand{\Hmeshmzero}{\bfH_{\!\edges_m,0}}

\newcommand{\Hmeshi}{H_{\!\edges^{(i)}}}
\newcommand{\Hmeshizero}{H_{\!\edges^{(i)},0}}

\newcommand{\llbracket}{\bigl[ \hspace{-0.55ex} |}
\newcommand{\rrbracket}{| \hspace{-0.55ex} \bigr]}
\newcommand{\iinud}{i \in \llbracket 1, d \rrbracket}

\newcommand{\dive}{{\mathrm{div}} }
\newcommand{\nablai}{{\boldsymbol \nabla}}

\newcommand{\ie}{\emph{i.e.\/}}
\newcommand{\mnn}{{m\in\xN}}

\newcommand{\characteristic}{\mathds{1}}

\begin{document}
\title[Convergence of the MAC scheme for variable density flows]
{Convergence of the MAC scheme for variable density flows}

\author{T. Gallou\"et}
\address{I2M UMR 7373, Aix-Marseille Universit\'e, CNRS, \'Ecole Centrale de Marseille.}
\email{thierry.gallouet@univ-amu.fr}

\author{R. Herbin}
\address{I2M UMR 7373, Aix-Marseille Universit\'e, CNRS, \'Ecole Centrale de Marseille.}
\email{raphaele.herbin@univ-amu.fr}

\author{J.C. Latch\'e}
\address{Institut de Radioprotection et de S\^uret\'e Nucl\'eaire (IRSN), Saint-Paul-lez-Durance, 13115, France.}
\email{jean-claude.latche@irsn.fr}

\author{K. Mallem}
\address{University of Skikda, Algeria.}
\email{khadidjamallem@gmail.com}

\subjclass[2000]{ }
\keywords{}

\begin{abstract}
We prove in this paper the convergence of an semi-implicit MAC scheme for the time-dependent variable density 
Navier-Stokes equations.
\end{abstract}
\maketitle
%
%
\section{Introduction}

Let $\Omega$ be a parallelepiped of $\xR^d$, with $d\in\lbrace2,3\rbrace$ and $T>0$, and consider the following variable density Navier-Stokes equations posed on $\Omega\times (0, T )$:
\begin{subequations} \label{pb:cont}
\begin{align}\label{mass} &
\partial_t \bar\rho+\dive(\bar\rho\,\bar\bfu)=0,
\\ \label{qdm} & 
\partial_t( \bar\rho \, \bar \bfu)+\dive(\bar\rho \ \bar\bfu \otimes \bar\bfu)-\Delta\bar\bfu+\nablai \bar p=\bff,
\\ \label{inc} &
\dive\ \bar\bfu=0, 
\end{align} 
\end{subequations}
where $\bar\rho$, $\bar\bfu$ and $\bar p$ are the density, the velocity and the pressure of the flow and $\bff \in L^2(0,T;L^2(\Omega)^d)$.
This system is complemented with initial and boundary conditions $ \bar\bfu|_{\partial {\Omega}}=0,$ $\bar\bfu|_{t=0}=\bfu_{0}$, $\bar\rho|_{t=0}=\rho_{0}$, which are such that $\rho_{0} \in  L^{\infty}(\Omega)$, $0 <  \rho_{ \min} < \rho_{0} \le \rho_{\max} $ and $ \bfu_{0}\in L^{2}(\Omega)^{d}$.  
A pair $(\bar \rho, \bar \bfu)$ is a weak solution of problem \eqref{pb:cont} if it satisfies the following properties:
\begin{list}{--}{\itemsep=0.5ex \topsep=0.5ex \leftmargin=1.cm \labelwidth=0.3cm \labelsep=0.5cm \itemindent=0.cm}
\item $\bar \rho\in \{ \rho\in L^{\infty}(\Omega\times(0,T)), \ \rho>0 \ a.e. \ \mbox{in} \ \Omega\times(0,T)\}$.
\item $\bar \bfu\in\{ \bfu \in L^{\infty}(0,T;L^{2}(\Omega)^{d})\cap L^{2}(0,T;H^{1}_{0}(\Omega)^{d}), 
\ \dive \ \bfu=0 \ a.e.\ \mbox{in} \ \Omega\times(0,T)\}$.
\item For all $\varphi$ in $C_{c}^{\infty}(\Omega\times[0,T))$,
\begin{equation}  \label{weak-mass}
-\int_{0}^{T}\int_{\Omega} \bar \rho \partial_t \varphi +\bar \rho\, \bar \bfu \cdot \nabla \varphi \dx\dt
=\int_{\Omega}\rho_{0}(\bfx)\varphi(\bfx,0)\dx.
\end{equation}
\item For all $\bfv$ in $\displaystyle \{\bfw\in C_{c}^{\infty}(\Omega\times[0,T))^{d}, \dive \ \bfw=0\}$,
\begin{equation} \label{weak-mom}
\int_{0}^{T}\int_{\Omega} \bigr[ -\bar \rho\, \bar \bfu \cdot \partial_t \bfv
- (\bar \rho\, \bar \bfu \otimes \bar \bfu ):\nabla\bfv  
+ \nabla \bar \bfu :\nabla\bfv \bigl]\dx\dt =
\int_{\Omega}\rho_{0} \bfu_{0} \cdot\bfv(\cdot,0)\dx+\int_{0}^{T}\int_{\Omega}\bff \cdot \bfv \dx\dt.
\end{equation}
\end{list}
The existence of such a weak solution was proven in \cite{sim-90-non}; convergence results exist for  the discontinuous Galerkin approximation \cite{liu-07-con} and for a finite volume/finite element scheme \cite{LS_densitevar}. 
Here we prove the convergence of the MAC scheme.
%
%
\section{The numerical scheme}\label{sec:discop}

Let $\mesh$ be a MAC mesh (see e.g. \cite{ghlm-16-macns}  and Figure \ref{fig:mesh} for the notations).
The  discrete pressure and density unknowns are associated with the cells of the mesh $\mesh$, and are denoted by $\big\{\rho_K,\ K \in \mesh \big\}$ and $\big\{p_K,\ K \in \mesh \big\}$.
The discrete velocity unknowns approximate the normal velocity to the mesh faces, and are denoted $(u_\edge)_{\edge\in\edgesi}$, $\iinud$, where $\edges$ is the set of the faces of the mesh, and  $\edges\ei$ the subset of the faces orthogonal to the $i$-th vector of the canonical basis of $\xR^d$.
We define $\edgesext=\{\edge\in\edges, \edge\subset \partial \Omega \}$, $\edgesint=\edges \setminus \edgesext$, $\edgesint\ei=\edgesint \cap  \edges\ei$ and $\edgesext\ei=\edgesext \cap  \edges\ei$.

\begin{figure}[tb]
 \centering
 \begin{tikzpicture}
	\fill[blue!20!white] (0.5,1.5)--(4.5,1.5)--(4.5,4)--(0.5,4)--cycle;
	\draw[thin,green,pattern=north east lines,pattern color=green!40!white] (2.5,1.5)--(6,1.5)--(6,4)--(2.5,4)--cycle;
	\path node at (0.6,3.5) [anchor= west]{$D_\edge$};
	\draw[very thin] (0.5,0.5)--(8.,0.5);
	\draw[very thin] (0.5,2.5)--(8,2.5);
	\draw[very thin] (0.5,5.5)--(8,5.5);
	\draw[very thin] (0.5,0.)--(0.5,6.);
	\draw[very thin] (4.5,0.)--(4.5,6.);
	\draw[very thin] (7.5,0.)--(7.5,6.);
	\draw[very thick] (0.5,2.5)--(4.5,2.5);
	\draw[very thick] (0.5,5.5)--(4.5,5.5);
	\draw[very thick] (4.5,2.5)--(7.5,2.5);

	\draw[very thick, red] (0.5,4)--(4.5,4);
	\draw[very thick, green!70!black] (4.5,1.5)--(4.5,4);
	\draw[very thick, blue] (0.5,1.5)--(0.5,4);
	\draw[very thin] (0.5,1.5)--(4.5,1.5);

	\path node at (0.6,5.1) [anchor= west]{$K$};
	\path node at (0.6,0.9) [anchor= west]{$L$};
	\path node at (3.7,2.7) {$\edge=K|L$};
	\path node at (7,2.7) {$\edge''$};
	\path node at (2.5,2.5) {$\times$};
	\path node at (2.5,5.5) {$\times$};
	\path node at (6,2.5) {$\times$};
	
	\path node at (2.5,5.2) {$\bfx_{\edge'}$};
	\path node at (2.5,2.2) {$\bfx_\edge$};
	\path node at (6.1,2.2) {$\bfx_{\edge''}$};
	
	\path node at (0.5,2.9) [anchor= west]{\textcolor{blue}{$\epsilon_2$}};
	\path node at (4.5,2.9) [anchor= west]{\textcolor{green!70!black}{$\epsilon_3$}};
	\path node at (3.88,5.3) {$\edge'$};
	\path node at (2.5,4.2) {\textcolor{red}{$\epsilon_1=\edge|\edge'$}};
	\path node at (0.5,-0.02) [anchor= north]{$\partial\Omega$};
	\draw[very thin, green!70!black, <->] (2.5,6.5)--(6,6.5); 
	\path node at (4.25,6.52) [anchor= south]{\textcolor{green!70!black}{$d_{\epsilon_3}$}};
	\path node at (5.5,3.5){\textcolor{green!50!black}{$D_{\epsilon_3}$}};
	\draw[very thin, blue, <->] (0.5,6.5)--(2.5,6.5);
	\path node at (1.5,6.52) [anchor= south]{\textcolor{blue}{$d_{\epsilon_2}$}};
	\draw[very thin, red, <->] (0,2.5)--(0,5.5); 
	\path node at (-0.02,4) [anchor= east]{\textcolor{red}{$d_{\epsilon_1}$}};
\end{tikzpicture}
\caption{Notations for control volumes and dual cells.}
\label{fig:mesh}\end{figure}
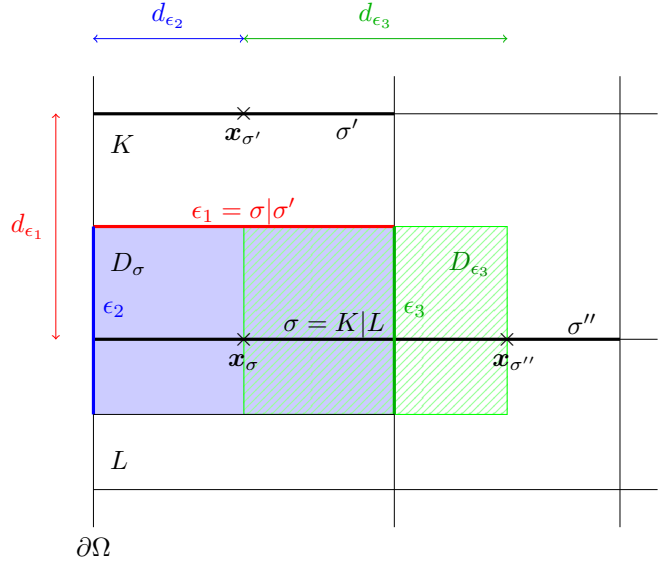

The regularity of the mesh is defined by:
\[
\eta_\mesh = \max\bigl\{ \frac{\vert \edge \vert}{\vert \edge' \vert},\ \edge \in \edgesi, \ \edge' \in \edgesj,\ i, j \in \llbracket 1, d\rrbracket, \ i\not = j \bigr\},
\]
and we denote by $h_\mesh$ the space step.
The discrete space $L_\mesh$ for the scalar unknowns (\ie\ the pressure and the density) is defined as the set of piecewise constant functions over each of the grid cells $K$ of $\mesh$, and the discrete space for the $i^{th}$ velocity component, $\Hmeshi$, as the set of piecewise constant functions over each of the grid cells $D_\edge,\ \edge\in\edgesi$.
The set of functions of $L_\mesh$ with zero mean value is denoted by $L_{\mesh,0}$.
As in the continuous case, the Dirichlet boundary conditions are (partly) incorporated into the definition of the velocity spaces:
\[
\Hmeshizero=\Bigl\{u\in\Hmeshi,\ u(\bfx)=0\ \forall \bfx\in D_\edge,\ \edge \in \edgesexti \Bigr\}, \quad \mbox{for } 1 \leq i \leq d
\]
(\ie\ we impose $u_\edge=0$ for all $\edge \in \edgesext$).
We then set $\Hmeshzero=\prod_{i=1}^d \Hmeshizero$.

\medskip
Let $0= t_{0} <t_{1}< \cdots < t_{N}=T$ be a partition of the time interval $(0,T)$, with $\delta t =t_{n+1}-t_n$.
Let $\{ u_{\edge}^{n+1}, \edge\in\edgesi, 0 \le n \le  N-1 ,\ 1\le i\le d\}$, $\{ p_{K}^{n+1}, K\in\mesh,\ 0 \le n\le  N-1\}$ and $\{\rho_{K}^{n+1}, K\in\mesh, 0 \le n \le  N-1 \}$ be the sets of discrete velocity, pressure and density unknowns.
Defining the characteristic function $\characteristic_A$ of any subset $A \subset \Omega$ by $\characteristic_A(\bfx)=1$ if $\bfx \in A$ and $\characteristic_A(\bfx)=0$ otherwise, the corresponding piecewise constant functions for the velocities are of the form:
\[
u_{i} = \sum_{n= 0}^{N-1} \sum_{\edge \in \edgesinti} u_{\edge}^{n+1}\characteristic_{D_\edge}\characteristic_{]t_n, t_{n+1}]},
\]
and  $X_{i,\edges,\delta t}$ denotes the set of such piecewise constant functions on time intervals and dual cells; we then set $\bfX_{\edges,\delta t} = \prod_{i=1}^d X_{i,\edges,\delta t}$. 
The pressure and density discrete functions are defined by:
\[
p = \sum_{n= 0}^{N-1} \sum_{K \in \mesh}p_{K}^{n+1} \characteristic_K\characteristic_{]t_n, t_{n+1}]},\qquad
\rho = \sum_{n= 0}^{N-1} \sum_{K \in \mesh} \rho_{K}^{n+1} \characteristic_K \characteristic_{]t_n, t_{n+1}]},  
\]  
and  $Y_{\mesh,\delta t}$ denotes the space of such piecewise constant functions.
The numerical scheme reads: 
\begin{subequations} \label{scheme}
\begin{align} \label{eq:scheme-init-rv} & 
\mbox{\bf Initialization: }\quad
\bfu^{(0)}= \widetilde{\mathcal{P}}_\edges \bfu_0, \quad \rho^{(0)} = \mathcal{P}_\mesh\rho_0.
\\ \nonumber & 
\mbox{\bf For } 0 \le n \le  N-1, \mbox{ solve for } \bfu^{n+1} \in \Hmeshzero,\ \rho^{n+1}\in L_\mesh \mbox{ and } p^{n+1}\in L_{\mesh,0}: 
\\[1ex] \label{scheme-mass} & \hspace{8ex}
\eth_t \rho^{n+1}+\dive_\mesh(\rho^{n+1}\bfu^n)=0,
\\[1ex] \label{scheme-mom} & \hspace{8ex}
\eth_t(\rho \bfu)^{n+1}+  {\bfC}_{\edges} (\rho^{n+1} \bfu^n)\ \bfu^{n+1}
- \Delta_{\edges} \bfu^{n+1} + \nabla_{\edges} \ p^{n+1} =\bff_{\edges}^{n+1}, 
\\[1ex] \label{scheme-div} & \hspace{8ex}
\dive_\mesh\bfu^{n+1} = 0,  
\end{align} 
\end{subequations}
with the interpolators and discrete operators defined as follows.

\medskip
\noindent {\bf Grid interpolators} -- The Fortin interpolator is defined by  $\widetilde{\mathcal{P}}_\edges \bfu = (\widetilde{\mathcal P}_{\edgesi})_{i=1,\ldots,d} $ with $\widetilde{\mathcal P}_\edgesi: H^1_0(\Omega) \longrightarrow \Hmeshizero$ and
\[
\vi \longmapsto \widetilde{\mathcal P}_\edgesi \vi = \sum_{\edge \in \edgesi} v_\edge\ \characteristic_{D_\edge}
\mbox{ with } v_\edge = \frac 1 {|\edge|} \int_\edge \vi \dgammax,\ \edge\in\edgesi.
\]
For $q\in L^2(\Omega)$,  $\mathcal{P}_\mesh q \in L_\mesh$ is defined by $\displaystyle \mathcal{P}_\mesh q (\bfx) = \frac 1 {|K|} \int_K q \dx$ for $\bfx \in K$.

\medskip
\noindent {\bf Discrete time derivative} --  For  $\rho \in  Y_{\mesh, \delta t}$, $\eth_t \rho  \in Y_{\mesh, \delta t}$ is defined by:
\[
\eth_t \rho (\bfx, t) = \sum_{n=0}^{N-1} \eth_t \rho^{n+1}(\bfx)\ \characteristic_{]t_n, t_{n+1}]}(t)
\mbox{ with}\quad
\eth_t \rho^{n+1} = \sum_{K\in \mesh}\frac 1 {\delta t}\ (\rho^{n+1}_K - \rho_K^n)\ \characteristic_K.
\]
 
\medskip
\noindent {\bf Discrete divergence} --
Let $\ucvedge$ be defined as $\ucvedge = u_\edge\, \bfn_{K,\edge} \cdot \bfe_i$ for any face $\edge \in \edgesi$, $i=1,\ldots, d$.
The discrete (upwind finite volume) divergence operator $\dive_\mesh$ is defined by:
\[ 
\dive_\mesh: \quad  L_\mesh \times \Hmeshzero \longrightarrow L_\mesh,
\quad (\rho,\bfu) \mapsto \dive_\mesh (\rho\bfu) = \sum_{K\in\mesh} \frac 1 {|K|} \sum_{\edge\in\edges(K)}  \Fcvedge\ \characteristic_K,
\]
with $\Fcvedge = | \edge |\ \rho_\edge \ucvedge$ for $K \in \mesh,\ \edge=K|L \in \edgesK$, and $\rho_\edge=\rho_K$  if $u_{K,\edge} \geq 0$, $\rho_\edge=\rho_L$ otherwise.
For all $K\in\mesh$, we set $(\dive \bfu)_K = \dive (1\times \bfu)_K$.

\medskip
\noindent {\bf Pressure gradient operator} --
The discrete pressure gradient operator is defined as the transpose of the divergence operator, so $\nabla_\edges: \ L_\mesh \longrightarrow \Hmeshzero,\ p \mapsto \nabla_\edges(p)$ with:
\begin{equation}\label{eq:def_grad_p}
\nabla_\edges p =\sum_{\edge=K|L \in \edgesint\ei} (\eth p)_\edge\, \bfn_{K,\edge}\ \characteristic_{D_\edge},
\mbox{ with} \quad (\eth p)_\edge=\frac{|\edge|}{|D_\edge|}  (p_L-p_K).
\end{equation}

\medskip
\noindent {\bf Discrete Laplace operator} --
The discrete diffusion operator $\Delta_\edges $ is defined in \cite{ghlm-16-macns} and is coercive in the sense that $ -\int_\Omega \Delta_{\edges} \bfv  \cdot \bfv \dx = \Vert\bfv\Vert_{1,\edges,0}^2$ for any $\bfv \in \Hmeshmzero$, where $\Vert\cdot\Vert_{1,\edges,0}$ is the usual discrete $\xH^1$-norm of $\bfu$ (see \cite{ghlm-16-macns}).
This inner product may also be formulated as the $L^2$-inner product of adequately chosen discrete gradients  \cite{ghlm-16-macns}.

\medskip
\noindent {\bf Discrete convection operator} --
The numerical convection fluxes and the approximations of $\rho$ in the momentum equation are chosen so as ensure that a discrete mass balance holds on the dual cells, in order to recover a discrete kinetic energy inequality.
This idea was first introduced in \cite{gal-08-unc,ans-11-anl} for the Crouzeix-Raviart and Rannacher-Turek scheme, in \cite{her-10-kin} for the MAC scheme and was adapted to a DDFV scheme \cite{gou-14-rhov}.
For $\edged = \edge | \edge'$, the convection flux $\int_{\edged} \rho u_{i} \bfu \cdot \bfn_{\edge,\edged}\dgammax$ is approximated by $F_{\edge,\edged} u_\edged,$ where $u_\edged=(u_\edge+u_{\edge'})/2$ and $F_{\edge,\edged}$ is the numerical mass flux through $\edged$ outward $D_{\edge}$ defined as follows:
\begin{list}{-}{\itemsep=0.ex \topsep=0.5ex \leftmargin=1.cm \labelwidth=0.7cm \labelsep=0.3cm \itemindent=0.cm}
\item First case -- The vector $\bfe_{i}$ is normal to $\edged$, and $\edged$ is included in a primal cell $K$.
Then the mass flux through $\edged=\edge | \edge'$ is given by:
\[
F_{\edge,\edged} = \frac 1 2  \bigl( F_{K,\edge}\ \bfn_{D_\edge,\edged} \cdot \bfn_{K,\edge}
+ F_{K,\edge'}\ \bfn_{D_\edge,\edged} \cdot \bfn_{K,\edge'} \bigr).
\]
\item  Second case -- The vector $\bfe_i$ is tangent to $\edged$, and $\edged$ is the union of the halves of two primal faces $\edgeperp$ and $\edgeperp'$ such that $\edge=K|L$ with $\edgeperp\in \edges(K)$ and $\edgeperp' \in \edges(L)$.
Then:
\[
F_{\edge,\edged}= \frac 1 2 \ (F_{K,\edgeperp}+ F_{L,\edgeperp'} ).
\]
\end{list}
\begin{remark}\label{rem:dual_flux}
In both cases,  for $\edged = \edge |\edge'$,  the mass flux $F_{\edge,\edged}$ may be written as $ F_{\edge, \edged}=|\edged| \rho_\edged \tilde u_\edged$, with $\rho_\edged=(\rho_\edge + \rho_{\edge'})$ and $\tilde u_\edged=(\rho_\edge u_\edge+\rho_{\edge'} u_{\edge'})/(\rho_\edge +\rho_{\edge'})$ in the first case, and $\rho_\edged = (|\edgeperp| \rho_\edgeperp +|\edgeperp'|\rho_{\edgeperp'}) /(|\edgeperp| + |\edgeperp'|)$ and $\tilde u_\edged = (|\edgeperp| \rho_\edgeperp u_\edgeperp +|\edgeperp '| \rho_{\edgeperp'} u_{\edgeperp'})/(|\edgeperp| \rho_\edgeperp +|\edgeperp'| \rho_{\edgeperp'})$ in the second case.
\end{remark}
With this expression of the flux, we may define a discrete divergence operator on the dual cells:
\[
\begin{array}{l|l}
\dive_\edgesi(\rho,\bfv): \quad
&
L_\mesh \times \Hmeshzero  \longrightarrow  L_\edges
\\ & \displaystyle \quad
(\rho, \bfv) \longmapsto  \dive_\edgesi(\rho,\bfv)= \sum_{\edge \in \edgesinti}  \dive_{D_\edge} (\rho\bfv) \ \characteristic_{D_\edge},
\\ & \quad \displaystyle
\mbox{with } \dive_{D_\edge} (\rho, \bfv) =\frac{1}{|D_\edge|} \sum_{\edged\in \edgesd(D_\edge)} F_{\edge, \edged},
\; \forall \edge \in \edgesinti.
\end{array}
\]

\medskip
For the definition of the time-derivative $\eth_t(\rho \bfu)$, an approximation of the density on the dual cell $\rho_{D_\edge}$ is defined as:
\[
|D_\edge|\, \rho_{D_\edge} = |D_{K,\edge}|\, \rho_K + |D_{L,\edge}|\, \rho_L, \qquad \edge\in\edgesint,\, \edge=K|L.
\]
With the above definitions, if $(\rho,\bfu) \in L_\mesh \times \bfX_{\edges,\delta t}$ satisfies the mass balance equation \eqref{scheme-mass}, then the following mass balance on the dual cells holds:
\begin{equation} \label{mass-dual}
\frac 1 {\delta t} (\rho_{D_{\edge}}^{n+1} -\rho_{D_{\edge}}^n) +\dive_{D_{\edge}} (\rho^{n+1}  \bfu^n)=0.
\end{equation}
Note that a discrete duality property also holds, in the sense that, for $1 \leq i \leq d$,
 \begin{equation} \label{ddfv-i}
\forall \rho \in L_\mesh, \forall \bfv \in \Hmeshzero, \forall w \in \Hmeshizero,
\int_\Omega  \dive_\edgesi (\rho, \bfv) w \dx = \int_\Omega   \rho    \bfv  \cdot  \nabla_{\!\!\!\edgesi} w \dx,
\end{equation} 
where $(\rho \bfv)_\edgesi$ and $\nabla_{\!\!\!\edgesi} w$  are vector valued functions of components:
\[
[(\rho \bfv)_\edgesi]_j = \sum_{\edged \in \edgesdij} \rho_\edged \tilde v_\edged \characteristic_{D_\edged},  \qquad
[(\nabla w)_\edgesi]_j  = \sum_{\edged \in \edgesdij, \edge = \overrightarrow{\edge |\edge'}} \frac{u_{\edge'} - u_\edge}{d_\edged} \characteristic_{D_\edged},  
\]
with $\rho_\edged$ and $\tilde v_\edged$ defined in Remark \ref{rem:dual_flux} and $\edgesdij =\{ \edged \in \edgesdi ; \edged \perp \bfe^{(j)}\}$.
We finally define the $i$-th component ${C}^{(i)}_\edges(\rho\bfu)$ of the non linear convection operator by:
\[
\begin{array}{l|l}
C\ei_\edges(\rho,\bfu):
\quad & \quad
\Hmeshizero  \longrightarrow  \Hmeshizero
\\ & \displaystyle \quad
v \longmapsto {C}^{(i)}_\edges(\rho,\bfu) v = \sum_{\edge \in \edgesdinti}  
\frac 1{|D_\edge|} \sum_{\substack{\edged \in \tilde{\edges} (D_\edge)\\  \edged=\edge|\edge'}} \!
F_{\edge,\edged}\  \frac{v_{\edge}+v_{\edge'}}2 \ \characteristic_{D_\edge}.
\end{array}
\]
and the full (\ie\ for all the velocity components) discrete convection operator $ {\bfC}_\edges(\rho,\bfu), \ \Hmeshzero \longrightarrow \Hmeshzero$  by 
$
{\bfC}_\edges (\rho,\bfu) \bfv = (C^{(1)}_\edges(\rho,\bfu) v_1, \ldots, C^{(d)}_\edges(\rho\bfu) v_d)^t.
$
Let $\bfE_\edges$ be the subspace of $\Hmeshzero$ of divergence-free functions (with respect to the discrete divergence operator).
By  H\"older's inequality and \cite[Lemma 3.9]{ghlm-16-macns}, there exists $C_{\eta_\mesh} >0$ (depending only on $\eta_\mesh$)  such that, $\forall\ (\rho,\bfu, \bfv,\bfw) \in L_\mesh \times \bfE_\edges \times\Hmeshzero^2$,
\[\begin{array}{ll} &
|\bfC_\edges(\rho\bfu)\bfv \cdot  \bfw| \leq
C_{\eta_\mesh}\, \|\rho\|_{L^{\infty}(\Omega)}\, \|\bfu\|_{L^4(\Omega)^d}\, \|\bfv\|_{L^4(\Omega)^d} \, \|\bfw\|_{1,\edges,0}
\\[1ex]
\mbox{and}
\quad &
|\bfC_\edges(\rho\bfu)\bfv \cdot  \bfw| \leq
C_{\eta_\mesh}\, \|\rho\|_{L^{\infty}(\Omega)}\, \|\bfu\|_{1,\edges,0}\, \|\bfv\|_{1,\edges,0} \, \|\bfw\|_{1,\edges,0}.
\end{array}
\]
%
%
\section{Estimates and convergence analysis}

Since the velocity is divergence-free, the mass equation is a transport equation on $\rho$, so that, thanks to the upwind choice, the following estimate holds:
\begin{equation} \label{bound-density}
\rho_{\min}\leq \rho^{n+1}\leq \rho_{\max},
\end{equation}
and the $L^2$-norm of $\rho^{n+1}$ is lower than the $L^2$-norm of the initial data $\rho_0$, for $0 \leq n \leq N-1$.
In addition, thanks to \eqref{mass-dual}, any solution to the scheme \eqref{scheme} satisfies the following discrete kinetic energy balance, for $1 \le i \le d$,  $\edge\in\edgesi$, $0\leq n \leq N-1,$ 
\begin{multline} \label{eq:kinetic}
\frac 1 {2 \delta t} \bigl[\rho_{D_{\edge}}^{n+1}(u_{\edge}^{n+1})^2-\rho_{D_{\edge}}^n(u_{\edge}^n)^2\bigr] 
+\frac{1}{2 \vert D_{\edge}\vert} \sum_{\substack{\edged\in\tilde{\edges}(D_{\edge})\\ \edged=\edge|\edge'}}
     F_{\edge,\edged}(\rho^{n+1},u^n) u_{\edge}^{n+1} u_{\edge'}^{n+1}
\\
-(\Delta u)_{\edge}^{n+1} u_{\edge}^{n+1}
+(\eth p)_\edge^{n+1} u_{\edge}^{n+1}
-  f_{\edge}^{n+1} u_{\edge}^{n+1}
= - \dfrac{1}{2 \delta t} \rho_{D_{\edge}}^n \bigl(u_{\edge}^{n+1}-u_{\edge}^n \bigr)^2.
\end{multline}
From this inequality, we obtain estimates on the velocity.
For $\bfu\in \bfX_{\edges,\delta t}$ satisfying \eqref{scheme}, there exists $C > 0$ depending on $\bfuini$, $\rho_0$ and $\bff$ such that,
\begin{equation}
\|\bfu\|_{L^{2}(\Hmeshzero)} \! = \!\!\sum_{n=0}^{N-1}\!\! \! \delta t \! \Vert \bfu^{n+1} \Vert_{1,\edges,0}^2 \leq C  \mbox{ and }
\|\bfu\|_{L^{\infty}(L^{2})}\! = \!\!\!\!\max_{0\leq n \leq N-1} \!\!\Vert \bfu^{n+1} \Vert_{L^{2}(\Omega)^{d}} \!\leq \!C.  \label{estiLdeux:ro}
	\end{equation}
These estimates yields the existence of a unique solution to the scheme: indeed, the first equation may be solved separately for $\rho^{n+1}$ and is linear with repect to this unknown and, once $\rho^{n+1}$ is known, the last two equations are a linear generalized Oseen problem for $\bfu^{n+1}$ and $p^{n+1}$, which is uniquely solvable thanks to the {\em inf-sup} stability of the MAC discretization.  
The convergence of the scheme requires some time compactness. 
Contrary to the constant density case \cite{ghlm-16-macns}, there is no uniform estimate on the time derivative, and compactness is obtained thanks to the following lemma together with the Fr\'echet-Kolmogorov theorem.

\begin{lemma}[Estimate on the time translates of the velocity] \label{lem:translates}
 Let $\bfu \in X_{\edges,\delta t}$ and $\rho \in Y_{\mesh,\delta t}$ and let $\tau >0$ then 
\begin{equation} \label{translates}
\int_0^{T-\tau}  \!\!\!\!\int_\Omega  \!\vert \bfu(\bfx,t+ \tau) \!- \!\bfu(\bfx,t) \vert^2 \dx \dt \le C_{\eta_\mesh,T} \frac{\rho_{\max}}{\rho_{\min}}(\Vert \bfu \Vert^3_{L^{2}(\Hmeshzero)} \!\!+1) \sqrt {\tau + \delta t }
\end{equation}
where $C_{\eta_\mesh,T}>0 $ only depends on $\Omega$, $T$, $\bff$ and on the regularity of the mesh $\eta_\mesh$.
\end{lemma}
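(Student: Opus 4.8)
The plan is to use the momentum balance \eqref{scheme-mom} to express the velocity time increment through quantities controlled by the a priori estimates \eqref{bound-density} and \eqref{estiLdeux:ro}, the decisive point being to weight the square $|\bfu(\cdot,t+\tau)-\bfu(\cdot,t)|^2$ by the dual density $\rho_{D_\edge}$ evaluated \emph{at time $t+\tau$}, so that — after using the momentum equation — the pressure term disappears. Throughout, $(\rho,\bfu,p)$ denotes a solution of \eqref{scheme}. Since $\rho_{D_\edge}^{n+1}$ is a convex combination of neighbouring primal densities, $\rho_{\min}\le\rho_{D_\edge}^{n+1}\le\rho_{\max}$, so it suffices to bound $I:=\int_0^{T-\tau}\!\int_\Omega\rho_{D_\edge}\big|_{t+\tau}\,|\bfu(\cdot,t+\tau)-\bfu(\cdot,t)|^2\dx\dt$ and divide by $\rho_{\min}$. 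Fix $t\in(0,T-\tau)$ and let $k=k(t)$, $l=l(t)$ be defined by $t\in(t_k,t_{k+1}]$, $t+\tau\in(t_l,t_{l+1}]$, so $\bfu(\cdot,t)=\bfu^{k+1}$, $\bfu(\cdot,t+\tau)=\bfu^{l+1}$ and, since $k\ge0$, $\bfu^0$ never appears below; set $\bfw:=\bfu^{l+1}-\bfu^{k+1}$, which is discretely divergence-free by \eqref{scheme-div}. On each dual cell, the identity $\rho_{D_\edge}^{l+1}(u_\edge^{l+1}-u_\edge^{k+1})^2=(u_\edge^{l+1}-u_\edge^{k+1})(\rho_{D_\edge}^{l+1}u_\edge^{l+1}-\rho_{D_\edge}^{k+1}u_\edge^{k+1})-(u_\edge^{l+1}-u_\edge^{k+1})\,u_\edge^{k+1}\,(\rho_{D_\edge}^{l+1}-\rho_{D_\edge}^{k+1})$, summed against $|D_\edge|$ over all $i$ and $\edge\in\edgesinti$, splits the spatial integrand as $J_1(t)-J_2(t)$, where $J_1$ pairs $\bfw$ with the increment of the dual momentum $\rho_{D_\edge}\bfu$ and $J_2$ carries the variation of $\rho_{D_\edge}$.

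For $J_1$ I telescope, $\rho_{D_\edge}^{l+1}u_\edge^{l+1}-\rho_{D_\edge}^{k+1}u_\edge^{k+1}=\delta t\sum_{n=k+1}^l\eth_t(\rho\bfu)_\edge^{n+1}$, and insert \eqref{scheme-mom}. Because $\bfw$ is divergence-free, the duality between $\nabla_\edges$ and $\dive_\mesh$ gives $\int_\Omega\nabla_\edges p^{n+1}\cdot\bfw\dx=0$ — this is precisely why the weighting was chosen, and it is indispensable since no estimate on $p$ is available. The diffusion term is $\le\|\bfu^{n+1}\|_{1,\edges,0}\|\bfw\|_{1,\edges,0}$ (Cauchy--Schwarz for the coercive symmetric form $\bfv\mapsto-\int_\Omega\Delta_\edges\bfv\cdot\bfv\dx$), the forcing term $\le C\|\bff_\edges^{n+1}\|_{L^2}\|\bfw\|_{1,\edges,0}$ (discrete Poincaré, \cite{ghlm-16-macns}), and, using $\bfu^n\in\bfE_\edges$ and $\|\rho^{n+1}\|_{L^\infty}\le\rho_{\max}$, the convection term $|\bfC_\edges(\rho^{n+1}\bfu^n)\bfu^{n+1}\cdot\bfw|\le C_{\eta_\mesh}\rho_{\max}\|\bfu^n\|_{1,\edges,0}\|\bfu^{n+1}\|_{1,\edges,0}\|\bfw\|_{1,\edges,0}$ by the second trilinear bound above. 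Hence $|J_1(t)|\le\big(\|\bfu(\cdot,t)\|_{1,\edges,0}+\|\bfu(\cdot,t+\tau)\|_{1,\edges,0}\big)\sum_{n=k+1}^l\delta t\,a_n$, with $a_n:=C_{\eta_\mesh}\rho_{\max}\|\bfu^n\|_{1,\edges,0}\|\bfu^{n+1}\|_{1,\edges,0}+\|\bfu^{n+1}\|_{1,\edges,0}+C\|\bff_\edges^{n+1}\|_{L^2}$ independent of $t$. Integrating over $t$, exchanging sum and integral, and using that $\{t\in(0,T-\tau):k(t)+1\le n\le l(t)\}$ has measure $\le\tau+\delta t$ for each fixed $n$, a single Cauchy--Schwarz in $t$ applied to the factor $\|\bfu(\cdot,t)\|_{1,\edges,0}+\|\bfu(\cdot,t+\tau)\|_{1,\edges,0}$ produces $(\tau+\delta t)^{1/2}\|\bfu\|_{L^2(\Hmeshzero)}$, while $\sum_n\delta t\,a_n\le C_{\eta_\mesh}\rho_{\max}\|\bfu\|_{L^2(\Hmeshzero)}^2+C_{T,\bff}\big(\|\bfu\|_{L^2(\Hmeshzero)}+1\big)$; the product $\|\bfu^n\|_{1,\edges,0}\|\bfu^{n+1}\|_{1,\edges,0}$ is harmless here because it is weighted by a single $\delta t$, so no $L^4$-in-time control of $\|\bfu(\cdot,\cdot)\|_{1,\edges,0}$ is needed.

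The real obstacle is $J_2$, which must be handled in the opposite order. Telescoping the dual mass balance \eqref{mass-dual} gives $\rho_{D_\edge}^{l+1}-\rho_{D_\edge}^{k+1}=-\delta t\sum_{n=k+1}^l\dive_{D_\edge}(\rho^{n+1}\bfu^n)$, so $J_2(t)=-\sum_{n=k+1}^l\delta t\sum_i\sum_\edge|D_\edge|\,w_\edge u_\edge^{k+1}\dive_{D_\edge}(\rho^{n+1}\bfu^n)$. Writing $\tfrac12(v_\edge+v_{\edge'})=v_\edge+\tfrac12(v_{\edge'}-v_\edge)$ in the definition of $C_\edges^{(i)}$, the inner sum equals $\bfC_\edges(\rho^{n+1}\bfu^n)\bfu^{k+1}\cdot\bfw$ minus a divergence-form remainder of the type $\tfrac12\sum_{\edged=\edge|\edge'}F_{\edge,\edged}(u_{\edge'}^{k+1}-u_\edge^{k+1})(w_\edge+w_{\edge'})$; the former is $\le C_{\eta_\mesh}\rho_{\max}\|\bfu^n\|_{1,\edges,0}\|\bfu^{k+1}\|_{1,\edges,0}\|\bfw\|_{1,\edges,0}$ by the trilinear bound, and the latter is bounded by the same quantity using the mass-flux bound $|F_{\edge,\edged}|\le C_{\eta_\mesh}\rho_{\max}|\edged|(|u_\edge^n|+|u_{\edge'}^n|)$, a discrete Hölder inequality with exponents $(4,4,2)$ and the discrete Sobolev embedding $\|\cdot\|_{L^4}\lesssim\|\cdot\|_{1,\edges,0}$ (all mesh sizes cancelling since $|\edged|\,d_\edged\sim|D_\edged|$; see \cite{ghlm-16-macns}). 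Hence $|J_2(t)|\le C_{\eta_\mesh}\rho_{\max}\|\bfu(\cdot,t)\|_{1,\edges,0}\big(\|\bfu(\cdot,t)\|_{1,\edges,0}+\|\bfu(\cdot,t+\tau)\|_{1,\edges,0}\big)\sum_{n=k+1}^l\delta t\,\|\bfu^n\|_{1,\edges,0}$. Now the prefactors depend on $t$ and are quadratic in $\bfu$, so the treatment of $J_1$ cannot be reused; instead I bound the inner sum \emph{uniformly in $t$}, $\sum_{n=k+1}^l\delta t\,\|\bfu^n\|_{1,\edges,0}\le(\tau+\delta t)^{1/2}\|\bfu\|_{L^2(\Hmeshzero)}$ — which is where the factor $(\tau+\delta t)^{1/2}$ enters for $J_2$ — and then integrate in $t$, $\int_0^{T-\tau}\|\bfu(\cdot,t)\|_{1,\edges,0}\big(\|\bfu(\cdot,t)\|_{1,\edges,0}+\|\bfu(\cdot,t+\tau)\|_{1,\edges,0}\big)\dt\le2\|\bfu\|_{L^2(\Hmeshzero)}^2$ by Cauchy--Schwarz. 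Collecting the two estimates, absorbing the data-controlled quantities into a constant depending only on $\Omega,T,\bff,\eta_\mesh$, homogenising the lower powers of $\|\bfu\|_{L^2(\Hmeshzero)}$ via $x\le1+x^3$, and dividing by $\rho_{\min}$ yields $I/\rho_{\min}\le C_{\eta_\mesh,T}\,\tfrac{\rho_{\max}}{\rho_{\min}}\big(\|\bfu\|_{L^2(\Hmeshzero)}^3+1\big)(\tau+\delta t)^{1/2}$, i.e.\ \eqref{translates}. The main obstacle, beyond the bookkeeping that forces the two opposite orders of summation/integration in $J_1$ and $J_2$, is making the pressure disappear while retaining a usable bound, which is achieved by the $\rho_{D_\edge}\big|_{t+\tau}$ weighting, the resulting $J_1$/$J_2$ splitting, and the dual mass balance.
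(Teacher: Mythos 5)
Your proof is correct and follows essentially the same strategy as the paper: weight the squared time increment by the dual-cell density, split it into a momentum-increment term (handled via the momentum balance, the pressure vanishing against the discretely divergence-free test function) and a density-increment term (handled via the dual mass balance \eqref{mass-dual}), and extract the factor $(\tau+\delta t)^{1/2}$ from the measure of the index window $\{t:\,k(t)+1\le n\le l(t)\}$. The only differences are cosmetic: you weight by $\widetilde\rho(\cdot,t+\tau)$ rather than $\widetilde\rho(\cdot,t)$, and for the density-increment term you carry out the discrete integration by parts directly on the dual faces and conclude with $L^4$ Sobolev embeddings, whereas the paper invokes the discrete duality formula \eqref{ddfv-i} and an $L^6$--$L^{6/5}$ H\"older pairing --- the same summation by parts in different clothing.
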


\begin{proof}
In the continuous case, see e.g. \cite[pages 444-452]{boy-06-ele}, the estimate \eqref{translates} is obtained by bounding the term $ \int_0^{T-\tau} \int_\Omega \left(\rho(\bfx,t) \bfu(\bfx,t+ \tau) -\rho(\bfx,t)  \bfu(\bfx,t) \right) \cdot\bfw(\bfx,t) \dt $
with $\bfw(\bfx,t) =  \bfu (\bfx,  t + \tau) -\bfu(\bfx,t)$.
However, in the context of the MAC scheme, the components of $\bfu$ are piecewise constant on different meshes so we need to treat the space indices separately.
For a given $i = 1, \ldots, d$, we denote by $u$ and $w$ the $i$-th  component of $\bfu$ and $\bfw$, and by $\widetilde \rho$ the  piecewise constant function defined by $\widetilde \rho (\bfx,t) = \rho_{D_\edge}^{n+1}$ for $ (\bfx,t) \in D_\edge \times [t_n t_{n+1})$.
We then wish to bound the terms
\begin{align*}
  & A\ei = \int_0^{T-\tau} (A_1^{(i)}(t) + A_2^{(i)}(t))\dt, \mbox{ with } \\
  & A\ei_1(t) = \int_\Omega \left( \widetilde \rho(\bfx,t+\tau) u(\bfx,t+\tau) -\widetilde\rho(\bfx,t)  u(\bfx,t)\right)  w(\bfx,t) \dx, \\
  & A\ei_2(t) = \int_\Omega \left(\widetilde \rho(\bfx,t) - \widetilde \rho(\bfx,t+\tau) \right) u(\bfx,t+ \tau)  w(\bfx,t) \dx.
\end{align*} 
For lack of space, we only deal here with the term $A_2^{(i)}(t)$. 
Thanks to the  mass balance on the dual cells \eqref{mass-dual} and to the discrete duality formula \eqref{ddfv-i} we have: 
\begin{align*}
A_2\ei(t)
&
 = \sum_{n=1}^{N-1} \delta t  \ \characteristic_{(t,t+\tau)}(t_n)
\int_\Omega \dive_\edgesi ( \tilde\rho^{n+1}  \bfu^n ) u(\cdot,t+\tau)  w(\cdot,t) \dx
\\ &
= \sum_{n=1}^{N-1} \delta t  \ \characteristic_{(t,t+\tau)}(t_n)
\int_\Omega   (\rho^{n+1}  u^n)_\edgesi  \nabla_{\!\!\!\edgesi}  (u(\cdot,t+\tau)  w(\cdot,t)) \dx.
\end{align*}
 Using H\"older's inequalities and the fact that $\displaystyle \sum_{n=1}^{N-1} \delta t\ \characteristic_{(t,t+\tau)}(t_n) \le \tau + \delta t$, 
\begin{align*}
A_2\ei(t)
&
\le  \rho_{\max}  \delta t \Bigl(\sum_{n=1}^{N-1} \Vert  u^n \Vert_{L^6} \Bigr)^{\frac 1 2}
\ \Bigl(\sum_{n=1}^{N-1} \characteristic_{(t,t+\tau)}(t_n) \Bigr)^{\frac 1 2}
\ \Vert \nabla_{\!\!\!\edgesi} (u(t+\tau)  w(t)) \Vert_{L^{\frac 6 5}}  
\\ &
\le | \Omega |^{\frac 1 6} \displaystyle \rho_{\max} \Vert u \Vert^{\frac 1 2}_{L^2(L^6)}
(\delta t+\tau)^{\frac 1 2} \Vert \nabla_{\!\!\!\edgesi} (u(t+\tau) w(t)) \Vert_{L^{\frac 3 2}}. 
\end{align*} 
Now, by H\"older's inequality,
\begin{multline*}
\Vert \nabla_{\!\!\!\edgesi} (u(\cdot,t+\tau)  w(\cdot,t))  \Vert_{L^{\frac 3 2}}
\le \Vert  (\nabla_{\!\!\!\edgesi}   u(\cdot,t+\tau) )  w(\cdot,t) \Vert_{L^{\frac 3 2}} 
+ \Vert u(\cdot,t+\tau)   \nabla_{\!\!\!\edgesi}  (w(\cdot,t))  \Vert_{L^{\frac 3 2}}
\hspace{10ex}\\
\le \Vert  \nabla_{\!\!\!\edgesi} u(\cdot,t+\tau) \Vert_{L^{2}}^2 + \Vert  w (\cdot,t)\Vert_{L^{6}}^2 +  \Vert  \nabla_{\!\!\!\edgesi}  w(\cdot,t)\Vert_{L^{2}}^2 + \Vert  u(\cdot,t+\tau)  \Vert_{L^{6}}^2. 
\end{multline*}
Therefore,  integrating  over $(0,T-\tau)$ yields that
\[
\int_0^{T-\tau}  A_2\ei(t) \dt \le
|\Omega|^{\frac 1 6}  \rho_{\max} [\tau + \delta]^{\frac 1 2}\Vert u \Vert_{L^2(L^6)} [\Vert u \Vert_{ L^2(L^6)}
+ \Vert w \Vert_{L^2(\Hmeshizero)} + \Vert w \Vert_{L^2(L^6)} + \Vert u \Vert_{L^2(\Hmeshizero)}].  
\]
Similar computations for the term $  \int_0^{T-\tau} A_1\ei (t) \dt$ yield the result.
\end{proof}

\begin{theorem}[Convergence of the scheme]
\label{conv:ins}
Let $(\delta t_{m})_{m\in \xN}$  and $ (\mesh_m)_\mnn$ be a sequence of 
time steps  and MAC grids such that $\delta t_{m} \rightarrow 0$
and $h_{\mesh_m} \to 0$  as $m \to +\infty$~; assume that there exists $\eta >0$ such that $\eta_{\mesh_m} \le \eta$ for any $m\in \mathbb N$.
Let $(\rho_m,\bfu_m)$ be a solution to  \eqref{scheme} for $\delta t=\delta t_{m}$ and $\mesh=\mesh_m$.
Then there exists $\bar\rho$ with $\rho_{\min}\leq\bar \rho\leq\rho_{\max}$
and $\bar \bfu \in L^{2}(0,T; \boldsymbol{E}(\Omega))$  such that, up to a subsequence:
\begin{list}{-}{\itemsep=0.ex \topsep=0.5ex \leftmargin=1.cm \labelwidth=0.7cm \labelsep=0.3cm \itemindent=0.cm}
\item the sequence $(\bfu_m)_\mnn$ converges to $\bar \bfu$ in $L^{2}(0,T; L^{2}(\Omega)^{d})$,
\item the sequence $(\rho_m)_\mnn$ converges to $\bar \rho$ in $ \in L^{2}(0,T;L^2(\Omega))$,
\item $(\bar \rho,\bar \bfu)$ is a solution to the weak formulation \eqref{weak-mass} and \eqref{weak-mom}.
\end{list}
\end{theorem}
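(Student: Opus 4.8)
The plan is to follow the compactness route now standard for finite volume schemes for incompressible flows, using the \emph{a priori} estimates \eqref{bound-density} and \eqref{estiLdeux:ro}, the time-translate estimate of Lemma \ref{lem:translates}, the discrete duality formulas, and a renormalization argument for the density; all convergences below are understood up to extraction of a subsequence. \emph{Compactness of the velocity.} The uniform bound $\|\bfu_m\|_{L^{2}(\Hmeshzero)}\le C$ controls the space translates of $\bfu_m$ (extended by $0$ outside $\Omega$) uniformly in $m$, while Lemma \ref{lem:translates} controls its time translates; hence, by the Fr\'echet--Kolmogorov theorem, $\bfu_m\to\bar\bfu$ in $L^{2}(0,T;L^{2}(\Omega)^{d})$. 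The bound $\|\bfu_m\|_{L^{\infty}(L^{2})}\le C$ gives $\bar\bfu\in L^{\infty}(0,T;L^{2}(\Omega)^{d})$, and the discrete $\xH^1$ bound, together with the usual identification of the weak limits of discrete gradients (see \cite{ghlm-16-macns}), gives $\bar\bfu\in L^{2}(0,T;H^{1}_{0}(\Omega)^{d})$, the discrete gradients of $\bfu_m$ converging weakly in $L^{2}$ to $\nabla\bar\bfu$. Passing to the limit in \eqref{scheme-div} tested against smooth functions yields $\dive\bar\bfu=0$ a.e., so $\bar\bfu\in L^{2}(0,T;\boldsymbol{E}(\Omega))$.

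\emph{Compactness of the density.} By \eqref{bound-density}, $\rho_m\rightharpoonup\bar\rho$ weakly-$*$ in $L^{\infty}(\Omega\times(0,T))$ with $\rho_{\min}\le\bar\rho\le\rho_{\max}$. Testing the discrete mass balance \eqref{scheme-mass} against interpolates of $\varphi\in C_{c}^{\infty}(\Omega\times[0,T))$ and performing an Abel summation in time, the time-derivative term converges to $-\int_{0}^{T}\!\int_{\Omega}\bar\rho\,\partial_t\varphi$ plus the initial contribution $\int_{\Omega}\rho_{0}\varphi(\cdot,0)$ (since $\mathcal{P}_{\mesh_m}\rho_{0}\to\rho_{0}$ in $L^{2}$), while the upwind convection term converges to $-\int_{0}^{T}\!\int_{\Omega}\bar\rho\,\bar\bfu\cdot\nabla\varphi$; here one uses that $\bfu_m\to\bar\bfu$ strongly in $L^{2}$ and $\rho_m\rightharpoonup\bar\rho$ weakly-$*$ in $L^{\infty}$ (so the discrete mass flux converges weakly to $\bar\rho\bar\bfu$), the time staggering $\bfu^{n}$ versus $\bfu^{n+1}$, the upwind choice of $\rho_\edge$ and the staggered reconstructions contributing only terms that vanish with $h_{\mesh_m}+\delta t_m$ thanks to the uniform estimates. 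Thus $\bar\rho$ satisfies \eqref{weak-mass}. Since $\bar\bfu\in L^{2}(0,T;H^{1}_{0}(\Omega)^{d})$ is divergence-free, the DiPerna--Lions theory applies to the linear transport equation it drives: $\bar\rho$ is its unique renormalized solution, whence $\|\bar\rho(t)\|_{L^{2}(\Omega)}=\|\rho_{0}\|_{L^{2}(\Omega)}$ for a.e.\ $t$. On the other hand, multiplying \eqref{scheme-mass} by $\rho^{n+1}_{K}$, summing over the cells, and using the discrete divergence-free property of $\bfu^{n}$ with the identity $a(a-b)=\tfrac12(a^{2}-b^{2})+\tfrac12(a-b)^{2}$ shows that the upwinding only dissipates the discrete $L^{2}$ norm, so $\|\rho_m(t)\|_{L^{2}(\Omega)}^{2}\le\|\mathcal{P}_{\mesh_m}\rho_{0}\|_{L^{2}(\Omega)}^{2}\le\|\rho_{0}\|_{L^{2}(\Omega)}^{2}$. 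Therefore $\limsup_m\int_{0}^{T}\!\int_{\Omega}\rho_m^{2}\le T\|\rho_{0}\|_{L^{2}}^{2}=\int_{0}^{T}\!\int_{\Omega}\bar\rho^{2}\le\liminf_m\int_{0}^{T}\!\int_{\Omega}\rho_m^{2}$, the last inequality by weak lower semicontinuity; the $L^{2}(\Omega\times(0,T))$ norms of $\rho_m$ converge to that of $\bar\rho$, which with the weak convergence gives $\rho_m\to\bar\rho$ strongly in $L^{2}(0,T;L^{2}(\Omega))$.

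\emph{Passage to the limit in the momentum equation.} Let $\bfv$ be smooth, divergence-free and compactly supported in $\Omega\times[0,T)$, and test \eqref{scheme-mom} against the Fortin interpolate of $\bfv$ at the discrete times, multiplied by $\delta t_m$ and summed. Since $\rho_m\in[\rho_{\min},\rho_{\max}]$ and $\rho_m\to\bar\rho$, $\bfu_m\to\bar\bfu$ strongly in $L^{2}$, one has $\rho_m\bfu_m\to\bar\rho\bar\bfu$ strongly in $L^{2}$; using moreover that $\bfu_m$ is bounded in $L^{2}(0,T;L^{6}(\Omega)^{d})\cap L^{\infty}(0,T;L^{2}(\Omega)^{d})$ for $d\le3$, hence (by interpolation) convergent in $L^{p}(\Omega\times(0,T))^{d}$ for $p<10/3$, one gets $\rho_m\,\bfu_m\otimes\bfu_m\to\bar\rho\,\bar\bfu\otimes\bar\bfu$ in $L^{1}$. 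Then: the time-derivative term, after Abel summation, converges to $-\int_{0}^{T}\!\int_{\Omega}\bar\rho\,\bar\bfu\cdot\partial_t\bfv-\int_{\Omega}\rho_{0}\bfu_{0}\cdot\bfv(\cdot,0)$ (the dual-cell densities $\rho_{D_\edge}$ and the staggered grids producing only vanishing terms, and $\rho^{(0)}_m\bfu^{(0)}_m\to\rho_{0}\bfu_{0}$ in $L^{2}$); the convection term, rewritten through Remark \ref{rem:dual_flux} and the discrete duality \eqref{ddfv-i}, converges to $-\int_{0}^{T}\!\int_{\Omega}(\bar\rho\,\bar\bfu\otimes\bar\bfu):\nabla\bfv$; the diffusion term converges to $\int_{0}^{T}\!\int_{\Omega}\nabla\bar\bfu:\nabla\bfv$ by weak convergence of the discrete gradients of $\bfu_m$ against the strongly convergent discrete gradients of the interpolate of $\bfv$; the pressure term vanishes \emph{identically}, because $\nabla_{\edges}$ is minus the transpose of $\dive_{\mesh}$ and the Fortin operator commutes with the divergence, so $\dive_{\mesh_m}(\widetilde{\mathcal{P}}_{\edges_m}\bfv)=0$; and the source term converges to $\int_{0}^{T}\!\int_{\Omega}\bff\cdot\bfv$. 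Collecting these limits gives \eqref{weak-mom}, which completes the proof.

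\emph{Main obstacle.} The delicate point is the strong convergence of the density: no compactness beyond the $L^{\infty}$ bound is available on $\rho_m$, so it must be extracted indirectly by matching the dissipative discrete $L^{2}$ estimate for $\rho_m$ with the conservation of the $L^{2}$ norm of the renormalized limit $\bar\rho$ — which presupposes that $\bar\bfu\in L^{2}(0,T;H^{1}_{0}(\Omega)^{d})$ has been identified and that the discrete mass equation, including its initial datum, has been correctly passed to the limit. The other technically heavy ingredient is the consistency of the discrete convection operator in the momentum equation, namely controlling all the staggered-grid interpolation errors uniformly in $m$.
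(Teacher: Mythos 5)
Your proposal is correct and follows essentially the same route as the paper, which only gives a sketch: Fréchet--Kolmogorov compactness for the velocity via the translate estimates, passage to the limit in the mass balance, strong $L^2$ convergence of the density by matching the dissipative discrete $L^2$ bound against the $L^2$ conservation of the renormalized limit solution (the content of the cited \cite[Proposition 8.7]{LS_densitevar}), and then passage to the limit in the momentum equation. Your write-up simply fills in the standard details consistently with the paper's references, and correctly identifies the strong density convergence as the delicate step.
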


\noindent \textbf{Sketch of proof:}
\begin{list}{--}{\itemsep=0.5ex \topsep=0.5ex \leftmargin=1.cm \labelwidth=0.3cm \labelsep=0.5cm \itemindent=0.cm}
\item Thanks to \eqref{bound-density}, there exists a subsequence of $(\rho_m)_{m\in \xN}$  star-weakly converging to some $\bar\rho$ in $L^\infty(\Omega\times(0,T))$; thanks to \eqref{estiLdeux:ro} and \eqref{translates}, there exists a subsequence of $(u_m)_{m\in \xN}$  converging to some $\bar u$ in $L^2(0,T;(L^2(\Omega)^d)$.
\item  Passing to the limit in   \eqref{scheme-mass} yields that $(\bar \rho, \bar u) $ satisfies \eqref{weak-mass}.
\item The strong  convergence of the approximate densities is then obtained thanks to the $L^2$ estimates for $\rho$ in both the discrete and continuous case\ \cite[Proposition 8.7]{LS_densitevar}.
\item Passing to the limit in  \eqref{scheme-mom} yields that  $(\bar \rho, \bar u) $ satisfies \eqref{weak-mom}.
\item We finally obtain that $\bar \bfu\in L^2(0,T;\bfE(\Omega))$, where $E(\Omega) = \{\bfv \in H^1_0(\Omega)$ s.t. $\dive \bfv = 0\}$, as in \cite[Proof of Theorem 4.3]{ghlm-16-macns}.
\end{list}
%
%
\bibliographystyle{plain}
\bibliography{./mac}
\end{document}